\newtheorem{theorem}{Theorem}[section]
\newtheorem{pro}[theorem]{Proposition}
\numberwithin{equation}{section}
\newtheorem{remark}[theorem]{Remark}
\newtheorem{example}{Example}
\begin{document}
\title{\vspace{-1cm} \bf Sobolev regularity of the canonical solutions to $\bar\partial$ on product domains  \rm}
\author{ Yuan Zhang}%\footnote{partially supported by NSF DMS-1501024}}
\date{}

\maketitle

\begin{abstract}
 Let $\Omega$ be a product domain in $\mathbb C^n, n\ge 2$, where each slice has smooth boundary. We observe  that the   canonical solution operator for the $\bar\partial$ equation on $\Omega$ is bounded in  $W^{k,p}(\Omega)$, $k\in \mathbb Z^+, 1<p<\infty$.  This Sobolev regularity is sharp in view of Kerzman-type examples.

\end{abstract}

\renewcommand{\thefootnote}{\fnsymbol{footnote}}
\footnotetext{\hspace*{-7mm}
\begin{tabular}{@{}r@{}p{16.5cm}@{}}
& 2010 Mathematics Subject Classification. Primary  32W05; Secondary 32A25, 32A36\\
& Key words and phrases.
canonical solution,   $\bar{\partial}$ equation, Bergman projection,  product domains, Sobolev regularity
\end{tabular}}

\section{Introduction}
Let $\Omega$ be a bounded pseudoconvex domain in $\mathbb C^n, n\ge 1$. According to H\"ormander's $L^2$ theory, given a $\bar\partial$-closed $(0,1)$ form $f\in L^2(\Omega)$, there exists a unique $L^2$ function that is perpendicular to $ ker(\bar\partial)$ and solves  
\begin{equation*} 
    \bar\partial u =f\quad \text{in} \quad \Omega.
\end{equation*}
 This solution is called the {\it canonical solution} (of the $\bar\partial$ equation). The $L^2$-Sobolev regularity of the canonical solutions has been  investigated through  Kohn's $\bar\partial$-Neumann approach  for domains with nice regularity and  geometry, such as convexity and/or finite type conditions. 
 
  The goal of the paper is to give  the $L^p$-Sobolev estimate of the canonical solutions on product domains. Here a product domain  $\Omega$   in $\mathbb C^n$ is a Cartesian product $D_1\times\cdots \times D_n$ of bounded planar domains $D_j, j=1, \ldots, n$. In particular, $D_j$ needs not  be simply-connected.  Then $\Omega$ is   (weakly) pseudoconvex   with at most Lipschitz boundary. The $L^p$ regularity of  the canonical solutions on product domains was already thoroughly understood through works of  \cite{La, CM, DLT, DPZ, Yu, Li, Zh} and the references therein. In the Sobolev category,    combined efforts in \cite{CS, JY, YZ, PZ2} have given the existence of a bounded solution  operator of $\bar\partial$ sending  $W^{k+n-2,p}(\Omega) $ into $W^{k,p}(\Omega) $, $k\in \mathbb Z^+, 1< p< \infty$.   Here   $W^{k,p}(\Omega) $ is the Sobolev space  consisting of functions whose weak derivatives on $\Omega$ up to order $k$ exist and belong to $L^p(\Omega)$. The main theorem is stated as follows.

 \begin{theorem}\label{Tbn}
Let $\Omega:=D_1\times \cdots  \times D_n \subset \mathbb C^n, n\ge 2$, where each $D_j$ is a  bounded domain in $\mathbb C$ with smooth boundary, $j=1, \ldots, n$. Given a $\bar\partial$-closed $(0,1)$ form $f  \in W^{k,p}(\Omega)$,  $k\in \mathbb Z^+, 1<p< \infty,$ the canonical solution $Tf$  of $\bar\partial u =f$ on $\Omega$ is in $W^{k, p}(\Omega)$. Moreover, there exists a constant $C$ dependent only on $\Omega, k$ and $p$ such that 
  \begin{equation*} 
   \|Tf\|_{W^{k , p}(\Omega)}\le C\|f\|_{W^{k, p}(\Omega)}.%; \quad \|Tf\|_{C^{k, \alpha}(D)}\lesssim \|f\|_{C^{k, \alpha}(D)}.
\end{equation*}
\end{theorem}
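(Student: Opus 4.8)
The strategy is to produce the canonical solution from an arbitrary bounded particular solution. If $S$ is any linear operator with $\bar\partial(Sf)=f$, then $Tf=Sf-B_\Omega(Sf)$, where $B_\Omega$ is the Bergman projection of $\Omega$: the subtracted term is holomorphic, so $\bar\partial(Tf)=f$, and $Tf=(\mathrm{id}-B_\Omega)(Sf)$ is orthogonal to $\ker\bar\partial$. Hence it suffices to prove (a) that there is a particular solution operator $S$ with $\|Sf\|_{W^{k,p}(\Omega)}\le C\|f\|_{W^{k,p}(\Omega)}$ on $\bar\partial$-closed forms, and (b) that $B_\Omega$ is bounded on $W^{k,p}(\Omega)$; then $\|Tf\|_{W^{k,p}(\Omega)}\le(1+\|B_\Omega\|)\|Sf\|_{W^{k,p}(\Omega)}\le C\|f\|_{W^{k,p}(\Omega)}$.

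\emph{Boundedness of $B_\Omega$.} On a product domain the Bergman kernel factors, $K_\Omega=\prod_j K_{D_j}$, so $B_\Omega=B_{D_1}^{(1)}\circ\cdots\circ B_{D_n}^{(n)}$, where $B_{D_j}^{(j)}$ is the Bergman projection of $D_j$ acting in the $z_j$ variable. Since each $D_j$ has smooth boundary, Bell's regularity theory gives $B_{D_j}:W^{m,p}(D_j)\to W^{m,p}(D_j)$ boundedly for all integers $m\ge 0$ and $1<p<\infty$. It then remains to observe a \emph{lifting principle}: an operator acting only in the variable $z_j$ (or, more generally, only in a sub-product of the $D_i$'s) and bounded on $W^{m,p}$ of that factor for every $0\le m\le k$ extends to a bounded operator on $W^{k,p}(\Omega)$. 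This follows from the anisotropic description of the Sobolev norm on a product domain — for each $j$, $\|g\|_{W^{k,p}(\Omega)}^p$ is comparable to $\sum_{\gamma}\|\partial^\gamma g\|_{L^p\big(\prod_{i\neq j}D_i;\,W^{k-|\gamma|,p}(D_j)\big)}^p$, the sum over multi-indices $\gamma$ in the variables other than $z_j$ with $|\gamma|\le k$ — together with Fubini's theorem.

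\emph{The particular solution, by induction on $n$.} For $n=1$ take $Sf=C_{D_1}f_1$ with $f=f_1\,d\bar z_1$, where $C_D$ denotes the solid Cauchy transform; on a smooth bounded planar domain $C_D$ maps $W^{m,p}(D)$ into $W^{m+1,p}(D)$ for every $m\ge 0$, so $Sf\in W^{k,p}(D_1)$ with the desired bound. Assume the theorem for products of $n-1$ planar domains, so that there is a canonical solution operator $T'$ on $\Omega'=D_1\times\cdots\times D_{n-1}$ bounded on $W^{m,p}(\Omega')$ for all $m\in\mathbb Z^+$, and on $L^p(\Omega')$ by \cite{La,CM,DLT,DPZ,Yu,Li,Zh}. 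Write $\Omega=\Omega'\times D_n$, $z=(z',z_n)$, $f=f'+f_n\,d\bar z_n$ with $f'=\sum_{j<n}f_j\,d\bar z_j$, and $\bar\partial'=\sum_{j<n}\partial_{\bar z_j}\,d\bar z_j$; let $C_{D_n}$ act in $z_n$ and $T'$ act for each fixed $z_n$. Put
\[
u_0\ :=\ C_{D_n}f_n\ +\ T'\!\left(f'-\bar\partial'(C_{D_n}f_n)\right).
\]
Since $\partial_{\bar z_n}(C_{D_n}f_n)=f_n$ and, by $\bar\partial$-closedness, $\partial_{\bar z_n}f_j=\partial_{\bar z_j}f_n$ for $j<n$, the argument of $T'$ is $\bar\partial'$-closed, $\partial_{\bar z_n}u_0=f_n+T'\big(\partial_{\bar z_n}f'-\bar\partial' f_n\big)=f_n$, and $\bar\partial' u_0=\bar\partial'(C_{D_n}f_n)+\big(f'-\bar\partial'(C_{D_n}f_n)\big)=f'$; hence $\bar\partial u_0=f$. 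For the estimate, $C_{D_n}f_n\in W^{k,p}(\Omega)$ with norm $\lesssim\|f_n\|_{W^{k,p}(\Omega)}$ by the lifting principle, and — this is the point that removes the derivative loss of \cite{CS,JY,YZ,PZ2} — each component $\partial_{\bar z_j}(C_{D_n}f_n)=C_{D_n}(\partial_{\bar z_j}f_n)$ again lies in $W^{k,p}(\Omega)$ because $\partial_{\bar z_j}f_n\in W^{k-1,p}(\Omega)$ and $C_{D_n}$ gains one derivative, $W^{k-1,p}(D_n)\to W^{k,p}(D_n)$. Thus $f'-\bar\partial'(C_{D_n}f_n)\in W^{k,p}(\Omega)$ with norm $\lesssim\|f\|_{W^{k,p}(\Omega)}$; and since $T'$ involves only integral operations in $z'$ — so it commutes with $\partial_{z_n},\partial_{\bar z_n}$ — and is bounded on $W^{m,p}(\Omega')$ for all $0\le m\le k$, the lifting principle yields $\|u_0\|_{W^{k,p}(\Omega)}\lesssim\|f\|_{W^{k,p}(\Omega)}$. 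With (b) this proves the theorem.

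\emph{Where the work is.} The conceptual ingredient is the formula for $u_0$; once written down, $\bar\partial u_0=f$ is immediate from $\partial_{\bar z_n}C_{D_n}=\mathrm{id}$, $\bar\partial' T'=\mathrm{id}$ on $\bar\partial'$-closed forms, and the closedness of $f$. The technical heart is twofold. First, establishing on a smooth bounded planar domain that $C_D:W^{m,p}(D)\to W^{m+1,p}(D)$ for every $m\ge 0$: this is proved by induction on $m$, coupling $C_D$ with the domain Beurling transform $\mathcal B_D$ and the fact that the Cauchy integral carries Sobolev boundary data on $\partial D$ to Sobolev functions on $D$, and it is exactly here that the smoothness of each slice's boundary is used. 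Second, carrying the one-variable and lower-dimensional mapping properties through the anisotropic Sobolev spaces of the product — uniformly across the intermediate orders $0,1,\dots,k$ — is the bulk of the bookkeeping. The Lipschitz corners of $\partial\Omega$ never enter the argument, since every operator employed acts in a single variable or in a sub-product whose factors have smooth boundary; the sharpness asserted in the abstract is a separate matter, witnessed by Kerzman-type examples exploiting those corners.
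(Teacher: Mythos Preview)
Your overall architecture (build a particular solution, subtract the Bergman projection, lift one-variable estimates via Fubini/anisotropic Sobolev) is reasonable, and your treatment of $B_\Omega$ is fine. But the inductive particular-solution step contains a genuine gap, precisely at the sentence you flag as ``the point that removes the derivative loss.'' You claim that
\[
\partial_{\bar z_j}(C_{D_n}f_n)=C_{D_n}(\partial_{\bar z_j}f_n)\in W^{k,p}(\Omega)
\]
because $\partial_{\bar z_j}f_n\in W^{k-1,p}(\Omega)$ and $C_{D_n}:W^{k-1,p}(D_n)\to W^{k,p}(D_n)$. This does \emph{not} follow from your lifting principle. That principle (correctly) says: an operator bounded on $W^{m,p}(D_n)$ for all $0\le m\le k$ lifts to a bounded operator on $W^{k,p}(\Omega)$. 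It does \emph{not} say that a gain $W^{m,p}(D_n)\to W^{m+1,p}(D_n)$ lifts to $W^{k-1,p}(\Omega)\to W^{k,p}(\Omega)$. The gain from $C_{D_n}$ is only in the $z_n$ direction; when you take $k$ derivatives purely in $z'$, you need $\partial^{\gamma'}_{z'}\partial_{\bar z_j}f_n$ with $|\gamma'|=k$, a $(k{+}1)$st $z'$--derivative of $f_n$, which $\|f\|_{W^{k,p}}$ does not control. Equivalently, after invoking $\bar\partial$-closedness you are asking that $C_{D_n}\partial_{\bar z_n}$ be bounded on $L^p(D_n)$; by Cauchy--Pompeiu this operator equals $\mathrm{id}$ minus the Cauchy boundary integral, which requires a trace and is not bounded on $L^p$. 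So as written, your argument reproduces exactly the one-derivative loss per step that you set out to avoid.

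The fix brings you back to the paper's proof. Replace the solid Cauchy transform $C_{D_n}$ by the planar canonical solution $T_n=\partial G_n$. Then Spencer's identity $T_n\bar\partial_n=\mathrm{id}-P_n$ gives
\[
f'-\bar\partial'(T_nf_n)=f'-T_n\bar\partial_n f'=P_nf',
\]
which \emph{is} in $W^{k,p}(\Omega)$ by your lifting principle applied to the slice Bergman projection. Unwinding the induction then yields (a reordering of) Li's formula
\[
Tf=T_1f_1+T_2P_1f_2+\cdots+T_nP_1\cdots P_{n-1}f_n,
\]
and the estimate follows immediately from the $W^{k,p}(\Omega)$--boundedness of each $T_j$ and $P_j$ (Proposition~\ref{TPj}). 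That is exactly the paper's argument: it bypasses the correction term $\bar\partial'(\,\cdot\,)$ altogether by working with an explicit formula in which every piece is already a composition of slice operators bounded on $W^{k,p}$.
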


  The proof of Theorem \ref{Tbn} is essentially an observation  on  a representation formula of the canonical solutions    % Such a representation was initially constructed  in \cite{DPZ}, which utilizes   the Green's function and a   fine decomposition   to obtain a pointwise  estimate of the canonical solution kernel, from which the $L^p$ and  supnorm estimates follow (\cite{Yu, Zh} etc). In this paper  we shall adopt a  representation formula   
introduced by Li \cite{Li}, according to which  it  boils down to the  Sobolev estimates of  the Bergman projection  and   canonical solution operators on planar domains.  On the other hand, with an application of a formula of Spencer  on planar domains, the    Sobolev estimates of these two operators are simply a consequence of a   result of Jerison and Kenig in \cite{JK}. % we observe  that on smooth  domains in $\mathbb C$, the Bergman projection operator maintains the Sobolev regularity  (Theorem \ref{Pb}), and the canonical solution operator  improves the Sobolev regularity by order one  (Theorem \ref{Tb}). %  As a direct consequence of Theorem \ref{Pb}, one obtains the  Sobolev boundedness   of the Bergman projection  on product domains in Theorem \ref{Pbn}. Note that the  smoothing property    of the canonical solutions on planar domains is in stark contrast to   the  $n\ge 2$ case  in Theorem \ref{Tbn}.  
In Example \ref{ex}, a  datum $f$  on the bidisc is constructed, such that  $f\in  W^{k,   q} $  for all $1<q<p$, yet   $\bar\partial u =f$    has no $ W^{k, p}$ solutions.  This example indicates that  the $\bar\partial$ problem   does not gain Sobolev regularity on product domains in general, and thus the  estimate in Theorem \ref{Tbn} is sharp.

 %Unless otherwise indicated, we always assume $k\in \mathbb Z^+\cup\{0\}$ and $1<p<\infty$.

\medskip
\noindent{\bf Acknowledgement:} The author thanks Professor Song-Ying Li for helpful comments and suggestions.

\section{Bergman projection and canonical solutions on  planar domains}
Let $D$ be a bounded domain in $\mathbb C$ whose boundary $bD$ is smooth, and   $g$ be the  Green's function on $D$. In other words, at a fixed pole  $w \in D$,  
$$
g(z, w): = -\frac{1}{2\pi}\sup \left\{ u(z):   u \in SH^-(D)\ \text{and}\  \limsup_{\zeta \to w} (u(\zeta) - \log |\zeta - w|) < \infty \right\}, \quad z\in D,
$$
where $SH^-(D)$ is the collection of negative subharmonic functions on $D$. It is known (\cite{Ev} etc.) that $g$ is symmetric on the two variables $z$ and $w$. Moreover, there exists a harmonic function $h_w $ on $D$ with $  h_w  =  \frac{1}{2\pi}\ln|\cdot -w|$ on $bD$ such that 
\begin{equation}\label{g}
    g(\cdot, w) = -\frac{1}{2\pi}\ln|\cdot -w| + h_w\quad \text{in} \quad D.
\end{equation}
In particular, $h_w\in C^\infty(D)$ and 
\begin{equation}\label{gb}
    g(z, w) =g(w, z)=0, \quad z\in bD.
\end{equation}

Given $f\in L^p(D), 1<p<\infty$, define
\begin{equation}\label{G}
Gf: =   -4\int_{D}   g(\cdot, w)  f(w) d\nu_w \quad \text{in}\quad D.
\end{equation}
 Here $d\nu$ is the Lebesgue measure on $\mathbb C$. Then $G f$  is the solution to the Dirichlet problem
\begin{equation*} \label{D}
\begin{cases}
 \Delta u  = 4  f , &\quad \text{in} \quad D; \\
u =0, & \quad \text{on}\quad  bD .
\end{cases}
\end{equation*}
Moreover,    %with an application of the Calder\'on-Zygmund theory and boundary layer potentials 
 $G$ is a bounded operator sending $W^{\alpha-2, p}(D)$ into $W^{\alpha, p}(D),  1<p<\infty, \alpha> \frac{1}{p}$. See  \cite[Theorem 0.3]{JK} by Jerison and Kenig. In particular, if  $f\in W^{k-1, p}(D), k\in\mathbb Z^+\cup\{ 0\}, 1<p<\infty$, then
\begin{equation}\label{Gb}
\|Gf\|_{W^{k+1, p}(D)}    \lesssim \|f\|_{W^{k-1, p}(D)}.  
\end{equation}
Here and throughout the rest of the paper, we say two quantities $a$ and $b$ to satisfy $a\lesssim b$ if there exists a constant $C$ dependent only possibly on the underlying domain,  $k$ and $p$ such that $a\le Cb$.
 \medskip
 
The Bergman projection  operator $P$  on a domain $\Omega$ is the orthogonal projection of   $L^2(\Omega)$    onto the Bergman space $A^2(\Omega)$,  the space of $L^2$  holomorphic functions on $\Omega$.  Since  $A^2(\Omega )$ is a reproducing kernel Hilbert space, there exists a function $k: \Omega \times \Omega \rightarrow \mathbb C$, called the Bergman kernel, such that for all $f\in L^2(\Omega)$, 
\begin{equation*} 
 P f = \int_{\Omega } k (\cdot, w) f(w) {d \nu_w}  \quad \text{in}\quad \Omega.
\end{equation*}
On a smooth planar domain $D$, the Bergman kernel $k$ is related to the Green's function $g$ by
\begin{equation}\label{bg}
k(z, w)= {-4}  \partial_z \partial_{ \bar w}g(z, w), \ \ z\ne w\in D.
\end{equation}
 See \cite[pp 180]{Be}. Clearly,   $k(\cdot, w) \in C^\infty(\bar D)$ by \eqref{g}. 
 
 If $D$ is  simply-connected,  the Sobolev boundedness of the Bergman projection $P$ can be obtained   by   applying the known Sobolev  regularity  on the unit disc and the Riemann mapping theorem. On general smooth planar domains, Lanzani and Stein    suggested an approach to   estimate  $P$ briefly in \cite{LS}. For completeness and convenience of the reader, the detail of their approach  to the Sobolev regularity of $P$ is provided below.

%Denote by $K$ the Bergman projection kernel on $D$. Namely, given $h\in L^2(D)$, 

\begin{theorem}\label{Pb}
Let $D\subset \mathbb C$ be a  bounded domain with $C^\infty$ boundary.
Then  the Bergman projection $P$ is (or, extends as) a bounded operator on $W^{k,p}(D)$,  $k\in \mathbb Z^+\cup\{0\}$,   $ 1< p<\infty$. Namely,  for any $f\in W^{k, p}(D)$,
\begin{equation*} 
    \|Pf\|_{W^{k, p}(D)}\lesssim \|f\|_{W^{k, p}(D)}.
\end{equation*}

%2. The Bergman projection $P$ is a bounded operator on $C^{k,\alpha}(D)$,   $k\in \mathbb Z^+\cup\{0\}$, $ 0 <\alpha<1$. Namely,  for any $f\in C^{k, \alpha}(D)$,
%\begin{equation}\label{bb2}
 %   \|Pf\|_{C^{k, \alpha}(D)}\lesssim \|f\|_{C^{k, \alpha}(D)}.
%\end{equation}

\end{theorem}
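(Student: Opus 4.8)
The plan is to reduce the Sobolev estimate for $P$ to the Jerison--Kenig estimate \eqref{Gb} for the Green operator $G$ via a formula of Spencer expressing $P$ in terms of $G$ and elementary operators. First I would recall the relation \eqref{bg} between the Bergman kernel $k$ and the Green's function $g$, namely $k(z,w)=-4\,\partial_z\partial_{\bar w}g(z,w)$. Writing $P f(z)=\int_D k(z,w)f(w)\,d\nu_w$ and using this, together with \eqref{G}, one wants to convert the $\partial_{\bar w}$ falling on $g(z,w)$ into a derivative falling on $f$ by integration by parts. Since $g(z,\cdot)=0$ on $bD$ by \eqref{gb}, the boundary term vanishes, and one obtains a representation of the shape
\[
Pf = \partial_z\big(G(\partial\!\!\!/\, f)\big) \;+\; (\text{boundary/lower-order correction}),
\]
where $\partial\!\!\!/\,f$ denotes an appropriate first-order derivative of $f$. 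Concretely, Spencer's identity on a smooth planar domain reads, for $f\in C^\infty(\bar D)$,
\[
Pf = f - \partial_{\bar z}\big(G(\partial_{\bar z} f)\big)
\]
(or the analogous form with the roles of $\partial_z,\partial_{\bar z}$ adjusted so that $G$ acts on a compactly-controlled datum); I would first verify this identity by a direct integration-by-parts computation, checking that the two sides agree as elements of $A^2(D)^\perp\oplus A^2(D)$ — equivalently that the right side is holomorphic and reproduces holomorphic functions.

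Granting the identity $Pf = f - \partial_{\bar z} G(\partial_{\bar z} f)$, the $W^{k,p}$ estimate is then immediate: given $f\in W^{k,p}(D)$, we have $\partial_{\bar z}f\in W^{k-1,p}(D)$, so by \eqref{Gb} (with the index shifted appropriately, i.e. $G: W^{k-1,p}\to W^{k+1,p}$) we get $G(\partial_{\bar z}f)\in W^{k+1,p}(D)$, hence $\partial_{\bar z}G(\partial_{\bar z}f)\in W^{k,p}(D)$, and therefore
\[
\|Pf\|_{W^{k,p}(D)} \le \|f\|_{W^{k,p}(D)} + \big\|\partial_{\bar z}G(\partial_{\bar z}f)\big\|_{W^{k,p}(D)} \lesssim \|f\|_{W^{k,p}(D)}.
\]
For $k=0$ one applies instead the endpoint case $\alpha=1>\tfrac1p$ of the Jerison--Kenig theorem (boundedness $G:W^{-1,p}\to W^{1,p}$), which covers $P:L^p\to L^p$. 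A density argument (smooth functions up to the boundary are dense in $W^{k,p}(D)$, $D$ being a bounded smooth domain) then extends the estimate from $C^\infty(\bar D)$ to all of $W^{k,p}(D)$, which also justifies the parenthetical "(or, extends as)" in the statement since $P$ is a priori only defined on $L^2\cap W^{k,p}$.

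The main obstacle I anticipate is establishing the Spencer-type identity with the correct bookkeeping — in particular getting the derivatives and the operator $G$ arranged so that (i) no uncontrolled boundary integral survives the integration by parts (this is where \eqref{gb}, $g=0$ on $bD$, is essential, and one must be careful that it is $g(z,\cdot)$, not its normal derivative, that vanishes), and (ii) the datum fed into $G$ is a genuine derivative of $f$ so that the index shift in \eqref{Gb} produces a net gain matching the target space. One should also check that the formal manipulation with the singular kernel $\partial_z\partial_{\bar w}g$ is legitimate — e.g. by first integrating by parts away from the diagonal and controlling the limit — though since $g$ has only a logarithmic singularity this is routine. Once the identity is in hand, everything else is a one-line consequence of \eqref{Gb}.
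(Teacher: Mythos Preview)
Your approach is essentially identical to the paper's: establish Spencer's formula relating $P$ to $G$ by integrating by parts (using \eqref{bg} and the vanishing \eqref{gb} on $bD$), then read off the $W^{k,p}$ bound from the Jerison--Kenig estimate \eqref{Gb} and extend by density. The only correction is the precise form of the identity: the outer derivative must be $\partial_z$, not $\partial_{\bar z}$, i.e.\ $Pf = f - \partial_z G(\partial_{\bar z}f)$ (so that applying $\partial_{\bar z}$ to the right-hand side gives $\partial_{\bar z}f - \partial_{\bar z}\partial_z G(\partial_{\bar z}f)=0$, confirming holomorphy); with this adjustment your argument and the paper's coincide.
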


\begin{proof}
We shall need  the following Spencer's formula:  for any $f\in L^2( D)$, 
\begin{equation}\label{ps}
     Pf   +   \partial G \bar\partial f = f  \quad \text{in}\quad  D,
\end{equation}
where $G$ is defined in \eqref{G}. Note that $\partial G \bar\partial f $ is well-defined by \eqref{Gb}. The proof of \eqref{ps} can be found, for instance, in \cite[pp. 73-75]{Be}. Here we give  a short and direct proof. First  assume   $f\in C^\infty(\bar D)$.  Fix $z\in D$ and let $ D(z;\epsilon)$ be the disc centered at $z$ with radius $\epsilon>0$.   By  \eqref{bg} and the Stokes' theorem, 
\begin{equation*}
    \begin{split}
        Pf(z)  + \partial G \bar \partial f(z) = &-4\lim_{\epsilon\rightarrow 0}\left(\int_{D\setminus D(z;\epsilon)}  \partial_{\bar w}   {\partial_z}g(z, w) f(w) d\nu_w - \int_{D\setminus D(z;\epsilon)}   {\partial_z}g(z, w)  \partial_{\bar w}f(w) d\nu_w\right)\\
        =& 2i\int_{ bD }  \partial_z g(z, w) f(w) dw -2i \lim_{\epsilon\rightarrow 0}\int_{ bD (z;\epsilon)}    {\partial_z}g(z, w) f(w) dw =: I_1+I_2.
    \end{split}
\end{equation*}
Making use of \eqref{gb} we have
$$I_1 = 2i \partial_z \int_{ bD }  g(z, w) f(w) dw =0.  $$
For $I_2$, note that by \eqref{g} there exists  some bounded function $  h$ on $ D(z;\epsilon) $ such that $$ \partial_z g(z, \cdot) = \frac{1}{4\pi(\cdot-z)} +  h \quad \text{in}\quad D(z;\epsilon).$$
 By continuity of $f$,
$$ I_2 =-2i\lim_{\epsilon\rightarrow 0}\int_{ bD (z;\epsilon)}  \partial_z g(z, w) f(w) dw = \frac{1}{2\pi i} \lim_{\epsilon\rightarrow 0}\int_{ bD (z;\epsilon)}  \frac{ f(w)}{w-z} dw = f(z). $$
Hence \eqref{ps} holds for $f\in C^\infty(\bar D)$. For general $f\in L^2( D)$, choose $\{f_m\}_{m=1}^\infty\in C^\infty(\bar D)$ that converges to  $f$ in $L^2( D)$ norm as $m\rightarrow \infty$ (see, for instance, \cite[pp. 268]{Ev}).  Employing a standard density argument, we obtain \eqref{ps} by  the trivial boundedness of $P$  in $L^2(D)$, and the estimate \eqref{Gb} for $ G $ with $k=0$ and $ p=2$.

For $f\in  L^p(D), 1<p<2$,  one uses \eqref{ps} to extend $P$ on $f$ by  defining  $Pf: =  \lim_{m\rightarrow\infty } f_m-\partial G \bar\partial f_m$, where the family $\{f_m\}_{m=1}^\infty\in L^2(D)$ converges to $f$ in $L^p(D)$ norm. Note that this limit exists and is independent of the choice of $\{f_m\}_{m=1}^\infty$ due to \eqref{Gb}.  Moreover, for all $1<p<\infty$,  by \eqref{Gb} the extended operator $P$ satisfies
\begin{equation*}
    \begin{split}
        \left\|{P} f\right\|_{W^{k, p}(D)}\lesssim &\|f\|_{W^{k, p}(D)} +  \left\|G\bar\partial f \right\|_{W^{k+1, p}(D)} \lesssim  \|f\|_{W^{k, p}(D)} +  \left\| \bar\partial f \right\|_{W^{k-1, p}(D)} \lesssim  \|f\|_{W^{k, p}(D)}.
    \end{split}
\end{equation*}
This  completes the proof of the theorem.

\end{proof}

Given $f\in L^p(D), 1<p<\infty$, define  \begin{equation}\label{T}
     T f: = \partial G f\left( = -4\partial \int_{D}   g(\cdot, w)  f(w) d\nu_w\right)   \quad \text{in}\quad  D. \end{equation}
 Then $T$ is the canonical solution of $\bar\partial $ on $D$. Indeed,  by \eqref{D} one  first has $\bar\partial T   f = \bar\partial\partial Gf =f$ on $D$.  On the other hand,  for any $h\in A^2(D), $ 
$$ \langle T  f, h  \rangle = \langle T \bar\partial T f, h  \rangle=\langle Tf-P Tf, h  \rangle = \langle Tf-PTf, P h  \rangle =  \langle PT f-PT f,  h  \rangle =0,$$
implying $ T  f\perp A^2(\Omega) $. Here  in the first equality we used the fact that  $ \bar\partial T   f =f$ on $D$;  
in the second equality we used \eqref{ps} with $f$ replaced by $Tf$; in the third equality we used the fact that $Ph=h$ when $h\in A^2(D)$; in the fourth equality we used the  projection properties of $P$, i.e., $P^*=P=P^2$.  
The  Sobolev regularity of   $T $ below follows  immediately from   \eqref{Gb} and \eqref{T}.  
\medskip

\begin{theorem}\label{Tb}
Let $D$ be a bounded domain in $\mathbb C$ with smooth boundary. For each $k\in \mathbb Z^+\cup\{0\}, 1<p<\infty $,     the canonical solution operator $T$   of $\bar\partial $   on $D$ defined in \eqref{T}         is  a bounded operator sending $W^{k, p}(D)  $ into $W^{k+1, p}(D) $. Namely,  for any $f\in W^{k, p}(D)$, % and $C^{k,\alpha}(D)$,   $k\in \mathbb Z^+\cup\{0\}$, $ 0 <\alpha<1$. Namely, 
     \begin{equation*} 
   \|Tf\|_{W^{k+1, p}(D)}\lesssim \|f\|_{W^{k, p}(D)}.%; \quad \|Tf\|_{C^{k, \alpha}(D)}\lesssim \|f\|_{C^{k, \alpha}(D)}.
\end{equation*}

\end{theorem}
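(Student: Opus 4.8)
The plan is to read off the result directly from the definition $Tf = \partial G f$ in \eqref{T}, reducing everything to the regularity estimate \eqref{Gb} for the Green operator $G$ that was imported from \cite[Theorem 0.3]{JK}. Since $T$ is already defined as a genuine operator on $L^p(D)$ (unlike $P$, which needed the Spencer-formula extension argument), no density or extension subtleties arise here, and the proof is essentially a one-line chain of inequalities.

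Concretely, I would argue as follows. Fix $k\in\mathbb Z^+\cup\{0\}$ and $1<p<\infty$, and take $f\in W^{k,p}(D)$. Applying \eqref{Gb} with the Sobolev order raised by one — that is, reading \eqref{Gb} as the statement $\|Gh\|_{W^{k+2,p}(D)}\lesssim\|h\|_{W^{k,p}(D)}$ for $h\in W^{k,p}(D)$ (which is the $\alpha=k+2$ case of the Jerison--Kenig theorem, legitimate since $k+2>\tfrac1p$) — we get $\|Gf\|_{W^{k+2,p}(D)}\lesssim\|f\|_{W^{k,p}(D)}$. Since $\partial=\partial_z$ is a first-order constant-coefficient differential operator, it maps $W^{k+2,p}(D)$ boundedly into $W^{k+1,p}(D)$, so
\begin{equation*}
\|Tf\|_{W^{k+1,p}(D)} = \|\partial G f\|_{W^{k+1,p}(D)} \lesssim \|Gf\|_{W^{k+2,p}(D)} \lesssim \|f\|_{W^{k,p}(D)},
\end{equation*}
which is exactly the asserted estimate. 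That $Tf$ solves $\bar\partial u = f$ and is the canonical (i.e.\ $A^2(D)$-orthogonal) solution was already verified in the paragraph preceding the statement, so nothing further is needed on that front.

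There is essentially no main obstacle: the only point requiring a moment's care is the correct bookkeeping of Sobolev indices in \eqref{Gb}. The estimate \eqref{Gb} as displayed says $G:W^{k-1,p}\to W^{k+1,p}$; to produce a gain of one derivative after applying $\partial$ and land in $W^{k+1,p}$, one must invoke it at one index higher, i.e.\ $G:W^{k,p}\to W^{k+2,p}$, and check this falls within the range $\alpha>\tfrac1p$ of the Jerison--Kenig result — which it does since $\alpha=k+2\ge 2>1\ge\tfrac1p$. One could alternatively phrase the whole argument directly in terms of the general statement ``$G$ is bounded from $W^{\alpha-2,p}(D)$ into $W^{\alpha,p}(D)$ for $\alpha>\tfrac1p$'' with $\alpha=k+2$, which makes the index matching transparent. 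With that settled, the boundedness of $\partial$ between the relevant Sobolev spaces is immediate, and the theorem follows.
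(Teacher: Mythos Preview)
Your proposal is correct and matches the paper's approach exactly: the paper simply states that the Sobolev regularity of $T$ ``follows immediately from \eqref{Gb} and \eqref{T},'' which is precisely the chain $\|Tf\|_{W^{k+1,p}} = \|\partial Gf\|_{W^{k+1,p}} \lesssim \|Gf\|_{W^{k+2,p}} \lesssim \|f\|_{W^{k,p}}$ you wrote out. Your extra care with the index shift in \eqref{Gb} (invoking $G:W^{k,p}\to W^{k+2,p}$ rather than the displayed $G:W^{k-1,p}\to W^{k+1,p}$) is the only bookkeeping needed and is handled correctly.
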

 
\medskip

\begin{remark}
a). We can further make use of Theorem \ref{Tb} and  the Sobolev embedding theorem to conclude that   the canonical solution operator  $T$ sends $W^{k, \infty}(D)$ into $C^{k, \alpha}(D)$ for all $0<\alpha<1$  with 
\begin{equation*} 
 \|Tf\|_{C^{k,\alpha}(D) } \le C \|f\|_{W^{k, \infty}(D) },  
\end{equation*}
where $C$ depends only on $D, k$ and $\alpha$. In particular, this inequality improves a supnorm estimate in  \cite{BL}.\\
b). Another well-known solution operator $\tilde T$ of $\bar\partial $ on $D$ is given in terms of the universal Cauchy kernel as follows. 
$$\tilde Tf: = -\frac{1}{\pi } \int_D \frac{f(w)}{w-\cdot}d\nu_w\quad \text{in}\quad D. $$
It was proved by Prats  in \cite{Pr} that $\tilde T  $ enjoys a similar Sobolev regularity as $T$ (see also \cite{PZ} for a much simpler proof using Caldr\'on-Zygmund's classical singular integral theory):   
$$ \|\tilde Tf\|_{W^{k+1, p}(D)}\lesssim \|f\|_{W^{k, p}(D)}. $$

\end{remark}

%Recall that a domain $\Omega\subset \mathbb C^n$ satisfies condition $R$ if the corresponding Bergman projection $P$ sends $C^\infty(\overline \Omega)$ into $C^\infty(\overline \Omega)$. Bell showed ..... The following corollary follows directly from Sobolev embedding theorem and the result of Bell.

%\begin{cor}
%Let $D$ be a bounded domain in $\mathbb C$ with smooth boundary. Then $D$ satisfies condition $R$. Consequently, the following statements hold. \\
%1. Let $\tilde D$ be another planar domain with smooth boundary and let $\Phi$ be a biholomorphic map from $D$ to $\tilde D$. Then $\Phi$ extends to be a $C^\infty$ diffeomorphism between $D$ and $\tilde D$.\\
%2. For any $z\in D$, the Bergman kernel $K(z, \cdot)\in C^\infty(\bar D)$. 
%\end{cor}

\section{Canonical solutions on product domains}

Let $\Omega:=D_1\times \cdots  \times D_n \subset \mathbb C^n$, $n\geq 2$, where each $D_j$ is a bounded planar domain with smooth boundary. Denote by $P_j$   the Bergman projection  operator of $D_j, j=1, \ldots, n$. Then the Bergman projection $P$ of $\Omega$ satisfies
\begin{equation}\label{Pn}
    P = P_1 \cdots   P_n. 
\end{equation} 
%with its Bergman kernel $k$ equal to $$
%k(z, w) = \prod_{j=1}^n k_j(z_j, w_j), \quad (z,w)\in \Omega\times \bar\Omega. $$

%We next study the Sobolev boundedness of the canonical solution operator to $\bar\partial$ on $\Omega$.
Let $T_j$ be the canonical solution operator on $D_j$ defined in \eqref{T}, with $D$ replaced by $D_j$, $j=1, \ldots, n$. Given a $\bar\partial$-closed $(0,1)$ form $f =\sum_{j=1}^nf_jd\bar z_j\in L^p(\Omega)$, it was shown   in \cite[Theorem 2.5]{Li} (or, through a repeated application of \eqref{ps} together with the $\bar\partial$-closedness of $f$) that 
\begin{equation}\label{Tn}
Tf= T_1f_1 +T_2P_1 f_2 +\cdots +T_nP_1\cdots P_{n-1 }f_n    
\end{equation}
is the canonical solution  to $\bar\partial u = f$ on $\Omega$. The following proposition gives the Sobolev boundedness of $T_j$ and $P_j$ on $\Omega$.

\begin{pro}\label{TPj}
Let $\Omega:=D_1\times \cdots  \times D_n \subset \mathbb C^n$, where  each $D_j$ is a  bounded domain in $\mathbb C$ with smooth boundary, $j=1, \ldots n$. Then $T_j$ and $P_j$ are bounded operators in $W^{k, p}(\Omega) $, $ k\in \mathbb Z^+\cup\{0\}, 1<p< \infty$. Namely, for all $f\in W^{k, p}(\Omega)$,
\begin{equation*}
\begin{split}
   & \|T_jf\|_{W^{k , p}(\Omega)}\lesssim \|f\|_{W^{k, p}(\Omega)};     \quad \|P_jf\|_{W^{k , p}(\Omega)}\lesssim \|f\|_{W^{k, p}(\Omega)}.
\end{split}
\end{equation*}
\end{pro}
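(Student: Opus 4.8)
The plan is to exploit the product structure: the operators $T_j$ and $P_j$ act only in the $j$-th variable, with the remaining variables playing the role of inert parameters, so the estimates should follow by \emph{slicing} the $W^{k,p}(\Omega)$ norm, applying the one-variable results (Theorems \ref{Tb} and \ref{Pb}) on each slice, and reassembling via Fubini's theorem. After a relabeling we may assume $j=n$; write $z=(z',z_n)$ with $z'=(z_1,\dots,z_{n-1})\in D':=D_1\times\cdots\times D_{n-1}$, and recall that $T_n f=\partial_{z_n}G_nf$ with
$$
G_nf(z)=-4\int_{D_n} g_n(z_n,w_n)\,f(z',w_n)\,d\nu_{w_n},\qquad P_nf(z)=\int_{D_n} k_n(z_n,w_n)\,f(z',w_n)\,d\nu_{w_n},
$$
where $g_n$ and $k_n$ denote the Green's function and the Bergman kernel of $D_n$.

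\emph{First step.} Since $g_n$ and $k_n$ do not depend on $z'$ or $w'$, the operators $T_n$ and $P_n$ commute with every tangential derivative $\partial^\alpha_{z'}$ (here $\partial_{z'}$ denotes the real derivatives in the coordinates of $D'$). Precisely, for $f\in W^{k,p}(\Omega)$ and $|\alpha|\le k$ one has, in the sense of weak derivatives on $\Omega$,
\begin{equation*}
\partial^\alpha_{z'}P_nf=P_n(\partial^\alpha_{z'}f),\qquad \partial^\alpha_{z'}T_nf=T_n(\partial^\alpha_{z'}f).
\end{equation*}
This is verified by testing against $\phi\in C_c^\infty(\Omega)$ and interchanging the $w_n$-integration with differentiation/integration in $z$, which is legitimate because $g_n(z_n,\cdot)\in L^1(D_n)$ (it has only a logarithmic singularity) and $f\in W^{k,p}(\Omega)$; alternatively one argues by density from $f\in C^\infty(\bar\Omega)$.

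\emph{Second step.} By Fubini's theorem, for $f\in W^{k,p}(\Omega)$ the slice $f(z',\cdot)$ lies in $W^{k,p}(D_n)$ for a.e.\ $z'\in D'$, and $\|f\|_{W^{k,p}(\Omega)}^p\approx\sum_{|\alpha|+m\le k}\int_{D'}\|\partial^\alpha_{z'}f(z',\cdot)\|_{W^{m,p}(D_n)}^p\,d\nu_{z'}$, where $m$ counts the order of the (real) $z_n$-derivatives. Fix such $\alpha$ and $m$. For a.e.\ $z'$, Theorem \ref{Tb} applied on $D_n$, together with the trivial inclusion $W^{m+1,p}(D_n)\subseteq W^{m,p}(D_n)$, gives
\begin{equation*}
\big\|T_n(\partial^\alpha_{z'}f)(z',\cdot)\big\|_{W^{m,p}(D_n)}\lesssim\big\|T_n(\partial^\alpha_{z'}f)(z',\cdot)\big\|_{W^{m+1,p}(D_n)}\lesssim\big\|\partial^\alpha_{z'}f(z',\cdot)\big\|_{W^{m,p}(D_n)},
\end{equation*}
and likewise Theorem \ref{Pb} gives $\|P_n(\partial^\alpha_{z'}f)(z',\cdot)\|_{W^{m,p}(D_n)}\lesssim\|\partial^\alpha_{z'}f(z',\cdot)\|_{W^{m,p}(D_n)}$, with constants independent of $z'$.

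\emph{Final step.} Combining the commutation identity with the slice estimate,
\begin{equation*}
\int_{D'}\big\|\partial^\alpha_{z'}T_nf(z',\cdot)\big\|_{W^{m,p}(D_n)}^p\,d\nu_{z'}=\int_{D'}\big\|T_n(\partial^\alpha_{z'}f)(z',\cdot)\big\|_{W^{m,p}(D_n)}^p\,d\nu_{z'}\lesssim\int_{D'}\big\|\partial^\alpha_{z'}f(z',\cdot)\big\|_{W^{m,p}(D_n)}^p\,d\nu_{z'}\lesssim\|f\|_{W^{k,p}(\Omega)}^p.
\end{equation*}
Summing over all $\alpha$ and $m$ with $|\alpha|+m\le k$ and taking $p$-th roots yields $\|T_nf\|_{W^{k,p}(\Omega)}\lesssim\|f\|_{W^{k,p}(\Omega)}$; the argument for $P_n$ is identical. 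I expect the only genuinely non-routine point to be the rigorous justification of the first step---that tangential differentiation passes through the integral operators and that the resulting slice-wise bounds can be reassembled---all remaining steps being bookkeeping with Fubini's theorem and the one-variable results already established.
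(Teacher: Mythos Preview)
Your proof is correct and follows essentially the same approach as the paper: reduce to a single factor by relabeling, commute the tangential derivatives through $T_j$ and $P_j$ (the paper does this implicitly, also noting $\bar\partial_j T_j=\mathrm{id}$ and $\bar\partial_j P_j=0$ to handle the $z_j$-direction), and then apply the one-variable estimates from Theorems~\ref{Pb} and~\ref{Tb} slice-wise via Fubini. The only difference is cosmetic---the paper works with complex derivatives and writes out a single displayed chain of inequalities, while you organize the argument into three labeled steps with real derivatives.
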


\begin{proof}
For simplicity yet without loss of generality, assume $j=1$ and $n=2$.   Denote by  $ \nabla_j $   either $\bar\partial_j$ or $\partial_j$  in the $z_j$ variable.  %Namely, $\nabla_j$ is any linear combination of $\bar\partial_j$ and $\partial_j$.   
Since  $\bar\partial_{ 1}T_{1}= id$ and $\bar\partial_1 P_1 =0 $, we only need to prove for all $k_1, k_2\in \mathbb Z^+\cup\{0\}$, $k_1+k_2 =k$,
\begin{equation*}
    \|\partial_1^{k_1}T_1 \nabla_2^{k_2} f\|_{L^{ p}(\Omega)}\lesssim \|f\|_{W^{k, p}(\Omega)}; \quad   \|\partial_1^{k_1} P_1\nabla_2^{k_2}f\|_{L^{p}(\Omega)}\lesssim \|f\|_{W^{k, p}(\Omega)}.
\end{equation*}
In fact, making use of Theorem \ref{Tb} and Fubini Theorem, 
\begin{equation*}
    \begin{split}
        \|\partial_1^{k_1}T_1 \nabla_2^{k_2} f\|^p_{L^{p}(\Omega)}
        = & \int_{D_2} \left\|\partial_{1}^{k_{1}} T_{1}  \left( \nabla_2^{k_2} f\right)(\cdot, w_2) \right\|^p_{L^p(D_{ 1})}d\nu_{w_2}\\
        \lesssim& \sum_{m_{ 1}=0}^{k_{ 1}}\int_{D_2 } \left\|      \nabla_{1}^{m_{1}} \nabla_2^{k_2} f (\cdot, w_2)\right\|^p_{L^p(D_{ 1})}d\nu_{w_2} 
        \lesssim   \|f\|^p_{W^{k, p}(\Omega)}.
    \end{split}
\end{equation*}
The estimate for $P_1$ is done similarly with an application of Theorem \ref{Pb}.
 
% In the case when $p=\infty$, making use of \eqref{bb2}, 
% \begin{equation*}
 %   \begin{split}
 %       \|\partial_1^{k_1}T_1 \nabla_2^{k_2} f\|^p_{L^{\infty}(\Omega)}
   %     = & \int_{D_2} \left\|\partial_{1}^{k_{1}} T_{1}  \left( \nabla_2^{k_2} f\right)(\cdot, w_2) \right\|^p_{L^p(D_{ 1})}d\nu_{w_2}\\
 %       \lesssim& \sum_{m_{ 1}=0}^{k_{ 1}}\int_{D_2 } \left\|      \nabla_{1}^{m_{1}} \nabla_2^{k_2} f (\cdot, w_2)\right\|^p_{L^p(D_{ 1})}d\nu_{w_2} 
  %      \lesssim   \|f\|^p_{W^{k, p}(\Omega)}.
 %   \end{split}
%\end{equation*}

\end{proof}

In particular, the  proposition states that $T_j$ does not lose Sobolev regularity. This estimate of $T_j$  is also the best that one can expect when $n\ge 2$. This is because  $T_j$ only improves the regularity in the $z_j$ direction and has no smoothing effect on the rest of the variables. 
\medskip

\begin{theorem}\label{Pbn}
Let $\Omega:=D_1\times \cdots  \times D_n \subset \mathbb C^n, n\ge 1$, where each $D_j$ is a  bounded domain in $\mathbb C$ with smooth boundary, $j=1, \ldots, n$.
The Bergman projection $P$ is (or, extends as) a bounded operator in $W^{k,p}(\Omega)$,  $k\in \mathbb Z^+\cup\{0\}$,   $ 1< p<\infty$. Namely,  for any $f\in W^{k, p}(\Omega)$, 
\begin{equation*}
    \|Pf\|_{W^{k, p}(\Omega)}\lesssim\|f\|_{W^{k, p}(\Omega)}.
\end{equation*}
\end{theorem}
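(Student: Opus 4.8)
The plan is to deduce Theorem~\ref{Pbn} directly from the factorization \eqref{Pn}, namely $P = P_1\cdots P_n$, together with the one-variable Sobolev boundedness of each $P_j$ already packaged for the product setting in Proposition~\ref{TPj}. The point is that Proposition~\ref{TPj} (applied with each index $j$ in turn) tells us that $P_j : W^{k,p}(\Omega)\to W^{k,p}(\Omega)$ is bounded for every $k\in\mathbb Z^+\cup\{0\}$ and $1<p<\infty$; since the composition of bounded operators on a fixed Banach space is bounded, we get
\begin{equation*}
\|Pf\|_{W^{k,p}(\Omega)} = \|P_1\cdots P_n f\|_{W^{k,p}(\Omega)} \lesssim \|P_2\cdots P_n f\|_{W^{k,p}(\Omega)} \lesssim \cdots \lesssim \|f\|_{W^{k,p}(\Omega)},
\end{equation*}
with the implied constant the product of the $n$ individual constants, all depending only on the $D_j$'s, $k$ and $p$.

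Before chaining the estimates I would first justify \eqref{Pn} itself, or at least cite it: on a product domain the Bergman space $A^2(\Omega)$ is the closed span of products $h_1(z_1)\cdots h_n(z_n)$ with $h_j\in A^2(D_j)$, and the Bergman kernel is $k(z,w)=\prod_{j=1}^n k_j(z_j,w_j)$; Fubini then gives $P=P_1\cdots P_n$ on $L^2(\Omega)$, the factors commuting since $P_j$ acts only in the $z_j$ variable. The extension of $P$ to $W^{k,p}(\Omega)$ for $1<p<2$ is handled exactly as in the planar case (Theorem~\ref{Pb}): one defines $Pf$ as the $W^{k,p}$-limit of $P f_m$ for $f_m\in L^2(\Omega)\cap W^{k,p}(\Omega)$ approximating $f$, the limit existing and being independent of the approximating sequence precisely because of the uniform estimate just derived from Proposition~\ref{TPj}; alternatively one invokes the product Spencer-type identity obtained by iterating \eqref{ps}.

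There is essentially no main obstacle here — the theorem is a formal corollary of the factorization and of Proposition~\ref{TPj}. The only genuine content is in making sure each $P_j$ really is bounded on the \emph{product} Sobolev space $W^{k,p}(\Omega)$ rather than merely on $W^{k,p}(D_j)$, and that is exactly what the Fubini argument in the proof of Proposition~\ref{TPj} supplies: one integrates the planar estimate of Theorem~\ref{Pb} in the slice $D_j$ over the remaining variables, using that $\partial_j^{m}$ commutes with integration in the other variables and that $P_j$ kills $\bar\partial_j$-derivatives. So the write-up will be short: state \eqref{Pn}, cite/record the slicewise boundedness of each $P_j$ from Proposition~\ref{TPj}, compose, and note the density extension for small $p$. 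If one wants the argument to be self-contained without Proposition~\ref{TPj}, one can instead run the Fubini slicing argument once more directly on $P=P_1\cdots P_n$, but reusing Proposition~\ref{TPj} is cleaner.
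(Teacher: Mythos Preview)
Your proposal is correct and matches the paper's approach exactly: the paper's proof of Theorem~\ref{Pbn} is the one-line observation that it is ``a direct consequence of Proposition~\ref{TPj} and \eqref{Pn},'' i.e., compose the slicewise bounded operators $P_j$ using the factorization $P=P_1\cdots P_n$. Your additional remarks on justifying \eqref{Pn} and on the density extension for $1<p<2$ are reasonable elaborations but go beyond what the paper itself records.
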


\begin{proof}[Proof of Theorem \ref{Tbn} and Theorem \ref{Pbn}:]
The proof to Theorem \ref{Tbn} is a direct consequence of Proposition \ref{TPj} and \eqref{Tn}; the proof to  Theorem \ref{Pbn}  is  a direct consequence of Proposition \ref{TPj} and \eqref{Pn}.

\end{proof}

 Denote by $\triangle^2$ the bidisc in $\mathbb C^2$. The following Kerzman-type example demonstrates that the $\bar\partial$ problem in general does not  improve the Sobolev regularity. %In particular, the canonical solutions do not generally  gain Sobolev regularity and
In this sense the Sobolev estimate of the canonical solution operator in Theorem \ref{Tbn} is sharp.

\begin{example}\label{ex}
  For each  $k\in \mathbb Z^+\cup\{0\}$ and $ 1<p<\infty$, consider $  f= (z_2-1)^{k-\frac{2}{p}}d\bar z_1 $ on ${\triangle^2}$ if $ p\ne 2$, or $  f= (z_2-1)^{k-1}\log (z_2-1)d\bar z_1 $ on ${\triangle^2}$ if $ p = 2$, $\frac{1}{2}\pi <\arg (z_2-1)<\frac{3}{2}\pi$. Then  $  f\in W^{k,  q }({\triangle^2} )$ for all $1< q  <p$, and   is  $\bar\partial$-closed on ${\triangle^2}$. However, there does not exist a solution $u\in W^{k, p}({\triangle^2})$ to $\bar\partial u =  f$ on ${\triangle^2}$.  \end{example}

\begin{proof}One can directly verify that $  f\in W^{k,  q }({\triangle^2}) $ for all $1< q  <p$ and is $\bar\partial$-closed on ${\triangle^2}$.  Suppose there exists some $u\in W^{k, p }({\triangle^2} )$ satisfying $\bar\partial u =  f $ on ${\triangle^2}$. Then  $u = (z_2-1)^{k-\frac{2}{p}}\bar z_1+h \in W^{k, p }({\triangle^2} )$ for  some holomorphic function $h$ on ${\triangle^2} $. 
For each $(r, z_2) \in U: = (0,1) \times \triangle\subset \mathbb R^3$, consider  % $\subset  L^r(\partial\triangle)\subset L^1(\partial\triangle)$, consider 
    $$v(r, z_2): =\int_{|z_1|= r} {u}(z_1, z_2) dz_1. $$
   By  Fubini theorem and H\"older inequality,   \begin{equation*}
        \begin{split}
            \|\partial_{2}^k v\|^{p }_{L^{p }(U )} =&\int_{U}  \left|\int_{|z_1|= r} \partial_{ 2}^k{u}(z_1, z_2) dz_1\right|^{p } d\nu_{z_2} dr 
            =   \int_{|z_2|<1}\int_{0}^1\left|r  \int_{0}^{2\pi} |\partial_{ 2}^k{u}(re^{i\theta}, z_2 )| d\theta  \right|^{p } dr d\nu_{z_2} \\
            \lesssim & \int_{|z_2|<1}\int_0^1 \int_{0}^{2\pi} |{\partial_{ 2}^k u}(re^{i\theta}, z_2 )|^{p }d\theta r dr   d\nu_{z_2}  \le \|{u}\|^{p }_{W^{k, p }({\triangle^2} )}<\infty.
        \end{split}
    \end{equation*} 
Thus  $\partial_{ 2}^k v\in L^{ p }(U)$.

On the other hand, by Cauchy's theorem, for each $(r, z_2)\in U$,
  \begin{equation*}
    \begin{split}
    &\partial_{ 2}^k v(r, z_2) =C_{k,p}\int_{|z_1|=r}  (z_2-1)^{-\frac{2}{p}}\bar z_1dz_1 =   C_{k,p}(z_2-1)^{-\frac{2}{p}}\int_{|z_1|=r} \frac{r^2}{ z_1}dz_1 = 2\pi C_{k,p}  r^2i  (z_2-1)^{-\frac{2}{p}}  
   \end{split}
  \end{equation*}
  for some non-zero constant $C_{k,p}$ depending only on $k$ and $p$. However,  $r^2(z_2-1)^{-\frac{2}{p}}\notin L^{ p }(U )$. This is a  contradiction!  
  
\end{proof}

%Similar property holds for the Bergman projection.

%Therefore, the result in Section 1 immediately passes to product domains, and we should get the $L^p$ boundedness of the Bergman projection on $\Omega$ for any $p\in (1, \infty)$.

\bibliographystyle{alphaspecial}
 
\fontsize{10}{9}\selectfont

\fontsize{11}{11}\selectfont

\vspace{0.7cm}

\noindent zhangyu@pfw.edu,

\vspace{0.2 cm}

\noindent Department of Mathematical Sciences, Purdue University Fort Wayne, Fort Wayne, IN 46805-1499, USA.\\
\end{document}